\theoremstyle{plain}
\newtheorem{theorem}{Theorem}
\newtheorem{corollary}[theorem]{Corollary}
\newtheorem{lemma}[theorem]{Lemma}
\theoremstyle{remark}
\begin{document}

\title[Subgroups generated by generic automorphisms]{Closed subgroups generated by\\ generic measure automorphisms}

\author{S\l awomir Solecki}


\address{Department of Mathematics\\
1409 W. Green St.\\
University of Illinois\\
Urbana, IL 61801, USA}

\email{ssolecki@math.uiuc.edu}

\thanks{Research supported by NSF grant DMS-1001623.}

\subjclass[2010]{37A15, 22F10, 03E15}

\begin{abstract}
We prove that for a generic measure preserving transformation $T$, the
closed group generated by $T$ is a continuous homomorphic image of a closed
linear
subspace of $L_0(\lambda, {\mathbb R})$, where $\lambda$ is Lebesgue measure, and that the
closed group generated by $T$ contains an increasing sequence of finite
dimensional toruses whose union is dense.
\end{abstract}

\keywords{Measure automorphism, F-space, generic objects}

\maketitle

\section{Results}\label{S:res}
For a Borel atomless probability measure $\mu$ on a standard Borel space, by
${\rm Aut}(\mu)$
we denote the group, taken with composition as the group operation, of all measure classes
of invertible $\mu$-preserving transformations. The topology on it is the weakest topology
making the functions
\[
{\rm Aut}(\mu)\ni T \to \mu(TA\triangle A) \in {\mathbb R}
\]
continuous for each Borel set $A$. In this fashion ${\rm Aut}(\mu)$ becomes
a Polish group. For $T\in {\rm Aut}(\mu)$, $\langle T\rangle_c$ will denote the {\em closed} group
generated by $T$, that is, we set
\[
\langle T \rangle_c = \hbox{closure}(\{ T^n\colon n\in {\mathbb Z}\}).
\]
We study the structure of this group for a generic $T\in {\rm Aut}(\mu)$, that is, we are interested in
properties of this group that are exhibited on a comeager subset of $T\in {\rm Aut}(\mu)$. Our
results strengthen theorems of Ageev \cite{Ag} and de la Rue and de Sam Lazaro \cite{RS}.
They are also related to a question of Glasner and Weiss \cite{GW}. Below we say a bit more about it.
The methods developed in \cite{Ag} and in \cite{RS} play an important
role in our proofs. Additionally, we rely on the results from \cite{ACS} and \cite{Ki2}.

To state our main result, we need to recall some definitions.
Let $\lambda$ be Lebesgue measure on $[0,1]$. Let $L_0(\lambda,
{\mathbb R})$ be the linear space, with pointwise linear
operations, of all measure classes of $\lambda$-measurable
functions from $[0,1]$ to $\mathbb R$. We equip this space with the
topology of convergence in measure. This topology is Polish and
makes the linear operations continuous. In some situations, we consider $L_0(\lambda, {\mathbb R})$,
as a Polish group with vector addition as the group operation. Similarly,
let $L_0(\lambda, {\mathbb T})$ be the group, with pointwise
multiplication, of all measure classes of $\lambda$-measurable
functions from $[0,1]$ to the circle group ${\mathbb T}$. Again, we take
it with the topology of convergence in measure. This topology
makes $L_0(\lambda, {\mathbb T})$ into a Polish group. There is a
canonical continuous homomorphism
\[
\exp \colon L_0(\lambda, {\mathbb R}) \to L_0(\lambda, {\mathbb T})
\]
given by
\[
f\to e^{i f}.
\]

We will prove the following theorem.

\begin{theorem}\label{T:expo}
Let $\mu$ be a Borel atomless probability measure on a standard Borel
space. There exists a comeager subset $H$ of ${\rm Aut}(\mu)$ such
that for each $T\in H$ there exists a closed linear subspace $L$
of $L_0(\lambda, {\mathbb R})$ with the property that $\langle
T\rangle_c$ is topologically isomorphic to the subgroup $\exp(L)$ of
$L_0(\lambda, {\mathbb T})$.
\end{theorem}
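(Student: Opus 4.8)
The plan is to move from the abstract group $\langle T\rangle_c$ to a concrete model inside $L_0(\lambda,{\mathbb T})$ by means of the spectral theorem, and then to recognize that model as $\exp(L)$. The first and cleanest step is a spectral reduction. Under the Koopman correspondence $T\mapsto U_T$, acting on $L^2_0(\mu)=L^2(\mu)\ominus{\mathbb C}$, the topology of ${\rm Aut}(\mu)$ coincides with the strong operator topology, so $\langle T\rangle_c$ is topologically isomorphic to the strong closure of $\{U_T^n:n\in{\mathbb Z}\}$. Let $\sigma$ be the maximal spectral type of $U_T$, a probability measure on the unit circle ${\mathbb T}$. It is classical that a generic $T$ is weakly mixing, so $U_T$ has continuous spectrum and $\sigma$ is atomless; hence $({\mathbb T},\sigma)$ is isomorphic as a measure space to $([0,1],\lambda)$. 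Writing $z$ for the coordinate function, the functional calculus $\psi\mapsto\psi(U_T)$ identifies the strong closure of $\{U_T^n\}$ with the closure $G:=\overline{\{z^n:n\in{\mathbb Z}\}}$ formed in $L_0(\sigma,{\mathbb T})\cong L_0(\lambda,{\mathbb T})$; here one checks that strong convergence $U_T^{n_k}\to\psi(U_T)$ is equivalent to convergence $z^{n_k}\to\psi$ in measure, using $|z^{n_k}|=1$ and the maximality of $\sigma$. This exhibits $\langle T\rangle_c$ as a closed monothetic subgroup $G$ of $L_0(\lambda,{\mathbb T})$ and reduces the theorem to producing a closed linear subspace $L$ of $L_0(\lambda,{\mathbb R})$ with $\exp(L)=G$.

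Next I would construct $L$. Since $\exp$ is surjective onto $L_0(\lambda,{\mathbb T})$ with kernel the $2\pi{\mathbb Z}$-valued functions, the set $\tilde G:=\exp^{-1}(G)$ is a closed subgroup of $L_0(\lambda,{\mathbb R})$ with $\exp(\tilde G)=G$; the real task is to shrink $\tilde G$ to a closed linear subspace that still surjects. I would build $L$ as the closure of an increasing union $\bigcup_k V_k$ of finite-dimensional subspaces, where each $V_k$ consists of functions measurable with respect to a finite partition and $\exp(V_k)$ is a finite-dimensional torus contained in $G$. The existence of such tori inside $\langle T\rangle_c$ is exactly where genericity enters: using Rokhlin and periodic approximation together with the methods of Ageev \cite{Ag} and of de la Rue and de Sam Lazaro \cite{RS}, and the inputs from \cite{ACS} and \cite{Ki2}, one approximates $T$ by transformations with large periodic factors, each contributing a torus of rationally independent, \emph{quantized} eigenvalues in the closure. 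Arranging the partitions to refine with dense union then forces $\exp(L)\supseteq G$, while $L\subseteq\tilde G$ gives $\exp(L)\subseteq G$; this dense union of tori is also the second assertion of the abstract.

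The principal obstacle is precisely the passage in the previous paragraph from a merely subgroup structure to a genuine linear one: I must show that $G$ is divisible and carries enough one-parameter subgroups, i.e. that through a dense set of points $g=\exp(h)\in G$ the whole line $\{\exp(th):t\in{\mathbb R}\}$ lies in $G$, with $h$ chosen coherently so that these lines assemble into a single closed subspace. This is the feature supplied by flow-embeddability: for a generic $T$ one inserts $T=T_1$ into a measurable flow $(T_t)$, and the spectral model of the flow, obtained by applying Stone's theorem to $t\mapsto U_{T_t}$ on $L^2_0(\mu)$, provides a real, unwrapped variable and hence the one-parameter subgroups needed to linearize. Controlling how these interact with the closure $G$ of the integer powers, and verifying that the resulting $L$ is closed and exhausts $G$, is the heart of the argument.

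Once $L$ is constructed with $\exp(L)=G$, the proof concludes formally. As $G$ is closed in $L_0(\lambda,{\mathbb T})$ it is Polish, and $\exp|_L$ is a continuous surjective homomorphism of Polish groups inducing a continuous bijection $L/(L\cap\ker\exp)\to G$; by the open mapping theorem for Polish groups this bijection is open, hence a topological isomorphism, so $\exp(L)$ carries its subspace topology as a Polish group. Combining with the spectral reduction gives $\langle T\rangle_c\cong G=\exp(L)$, as required.
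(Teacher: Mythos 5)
Your spectral reduction (identifying $\langle T\rangle_c$ with a closed subgroup $G$ of $L_0(\lambda,{\mathbb T})$ for a generic, hence weakly mixing, $T$) and your closing formal step are essentially sound and agree with the paper. The gap is exactly where you yourself locate ``the heart of the argument'': the construction of a closed linear subspace $L$ with $\exp(L)=G$. Two concrete problems. First, your plan produces at best $L=\overline{\bigcup_k V_k}$ with $\bigcup_k\exp(V_k)$ \emph{dense} in $G$; density does not give $\exp(L)=G$ (a proper dense analytic subgroup would be meager in $G$, and then the open mapping theorem you invoke at the end is unavailable, since it requires surjectivity onto the Polish group $G$, or at least a non-meager image). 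Second, flow-embeddability gives one one-parameter subgroup through $T$ itself; you need one-parameter subgroups of ${\rm Aut}(\mu)$ lying \emph{entirely inside} $\langle T\rangle_c$ and passing through a comeager set of points of $\langle T\rangle_c$, and you supply no mechanism for transferring genericity statements about ${\rm Aut}(\mu)$ into genericity statements inside the closed (and meager) subgroup $\langle T\rangle_c$.

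The paper supplies exactly these two missing mechanisms. (a) A Kuratowski--Ulam-type transfer (Lemma~\ref{L:auto}), proved from King's theorem that $T\mapsto T^q$ maps non-empty open sets to somewhere dense sets together with Halmos's conjugacy lemma: a comeager subset of ${\rm Aut}(\mu)$ meets $\langle T\rangle_c$ in a comeager set for generic $T$. Applying this to the de la Rue--de Sam Lazaro theorem (the set of transformations lying on a flow is comeager) and to the Akcoglu--Chacon--Schwartzbauer theorem ($C(S)=\langle S\rangle_c$ generically) yields a comeager set of $S\in\langle T\rangle_c$ through which some one-parameter subgroup $X$ passes with ${\rm image}(X)\subseteq C(S)=\langle S\rangle_c\subseteq\langle T\rangle_c$; this centralizer trick is what forces whole flows into $\langle T\rangle_c$, and it is absent from your sketch. (b) The separable F-space $L_T$ of all one-parameter subgroups with image in $\langle T\rangle_c$ then satisfies $e(L_T)\supseteq A_T\cap{\rm image}(e)$, which is comeager in $\langle T\rangle_c$, and a comeager subgroup of a Polish group is the whole group --- this is how surjectivity, rather than mere density, is obtained. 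Finally, Mackey's cocycle theorem linearizes: every one-parameter subgroup of $L_0(\lambda,{\mathbb T})$ is $t\mapsto\exp(tg)$ for a unique $g$, giving a continuous linear $f\colon L_T\to L_0(\lambda,{\mathbb R})$ with $\exp\circ f=e\upharpoonright L_T$, and $L=\overline{f(L_T)}$ works because $\exp(L)$ contains $e(L_T)=\langle T\rangle_c$ and is contained in the closed group $\langle T\rangle_c$. Your torus-by-torus construction is closer in spirit to the paper's proof of Corollary~\ref{C:pro}(ii), where density is all that is needed; it does not suffice for Theorem~\ref{T:expo}.
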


In the corollary below we collect a couple of consequences of Theorem~\ref{T:expo}.
Define the topological group
\[
{\mathbb T}_\infty
\]
by letting it be the direct limit of the sequence ${\mathbb T}^n$, $n\in {\mathbb N}$,
where ${\mathbb T}^n$ is identified with the closed subgroup of ${\mathbb T}^{n+1}$ consisting of elements whose last coordinate is equal to $1$.
So open sets in ${\mathbb T}_\infty$ are precisely those sets whose intersection with each ${\mathbb T}^n$ is open.
Clearly ${\mathbb T}_\infty$ is not a Polish group.

\begin{corollary}\label{C:pro}
There is a comeager set $H\subseteq {\rm Aut}(\mu)$ such that for each $T\in H$
\begin{enumerate}
\item[(i)]  $\langle T\rangle_c$ is a continuous homomorphic image of a closed linear subspace of $L_0(\lambda, {\mathbb R})$;

\item[(ii)] there is a continuous
embedding of ${\mathbb T}_\infty$ into $\langle T\rangle_c$ whose image is dense in $\langle T\rangle_c$.
\end{enumerate}
\end{corollary}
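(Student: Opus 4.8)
The plan is to read both parts off Theorem~\ref{T:expo}, taking $H$ to be the comeager set it furnishes; thus for $T\in H$ we fix a closed linear subspace $L\subseteq L_0(\lambda,{\mathbb R})$ together with a topological isomorphism $\phi\colon \exp(L)\to \langle T\rangle_c$. Part (i) is then essentially formal: by the definition of $\exp(L)$ the restriction $\exp|_L\colon L\to \exp(L)$ is a continuous homomorphism of $L$ onto $\exp(L)$, so $\phi\circ(\exp|_L)\colon L\to \langle T\rangle_c$ is a continuous homomorphism of the closed linear subspace $L$ onto $\langle T\rangle_c$, which is exactly the assertion.

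The substance is in part (ii), where the goal is to locate inside $\exp(L)$ an increasing sequence of genuine compact tori with dense union. The first point I would make precise is which finite-dimensional subspaces of $L$ exponentiate to closed tori rather than to dense one-parameter subgroups: if $A_1,\dots,A_n$ are pairwise disjoint sets of positive measure and $V=\mathrm{span}(2\pi\chi_{A_1},\dots,2\pi\chi_{A_n})\subseteq L$, then $\exp(V)$ consists of the functions equal to a constant in ${\mathbb T}$ on each $A_j$ and to $1$ off $\bigcup_j A_j$, and the $2\pi{\mathbb Z}$-periodicity of $\exp$ in each coordinate makes $\exp(V)$ a closed subgroup topologically isomorphic to ${\mathbb T}^n$. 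I would then try to produce a refining sequence of finite measurable partitions ${\mathcal P}_1\prec{\mathcal P}_2\prec\cdots$ such that $2\pi\chi_A\in L$ for every $A$ in every ${\mathcal P}_k$ and such that the associated spans $V_k$ satisfy $\bigcup_k V_k$ dense in $L$. Refinement gives $V_k\subseteq V_{k+1}$, hence $\exp(V_k)\subseteq\exp(V_{k+1})$ as a subtorus that is a direct factor (on character lattices the restriction ${\mathbb Z}^{|{\mathcal P}_{k+1}|}\to{\mathbb Z}^{|{\mathcal P}_k|}$ is a split surjection), while density of $\bigcup_k V_k$ in $L$ yields density of $\bigcup_k \exp(V_k)$ in $\exp(L)$ by continuity of $\exp$.

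Granting such a sequence, I would assemble the map as follows. Each inclusion $\exp(V_k)\hookrightarrow\exp(V_{k+1})$ is a continuous injection of compact groups, hence a topological embedding realizing a direct-factor subtorus; after an automorphism of each torus these become coordinate inclusions, so $\exp(V_1)\subseteq\exp(V_2)\subseteq\cdots$ is a cofinal subsystem of the one defining ${\mathbb T}_\infty$ and has the same direct limit (the dimensions tend to infinity, since an infinite-dimensional $L$ admits no dense finite-dimensional subspace). The induced map ${\mathbb T}_\infty\to\exp(L)$ is continuous, because the direct-limit topology is the finest making each $\exp(V_k)\to\exp(L)$ continuous, and it is injective; composing with $\phi$ gives the sought continuous injective homomorphism ${\mathbb T}_\infty\to\langle T\rangle_c$ with dense image.

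The main obstacle is precisely the existence of a refining sequence of partitions as above: Theorem~\ref{T:expo} as a black box does not deliver them, and indeed cannot, since for $L$ equal to the line of constant functions $\exp(L)$ is a single circle, which contains no dense copy of ${\mathbb T}_\infty$. I would therefore extract the refining sequence from the construction of $L$ carried out inside the proof of Theorem~\ref{T:expo}, where $L$ is built as the closed linear span of exactly such an increasing family of scaled step-function subspaces; this is the same structure that the abstract records as the increasing sequence of finite-dimensional toruses. Finally I would remark that the embedding in (ii) should be understood as a continuous injective homomorphism: its image lies in a Polish group and so is metrizable, whereas ${\mathbb T}_\infty$ is not, so the direct-limit topology is strictly finer than the subspace topology and no homeomorphism onto the image can be expected.
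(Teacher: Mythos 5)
Part (i) is fine and matches the paper. For part (ii), you have correctly diagnosed the crux --- Theorem~\ref{T:expo} as a black box cannot yield the tori, as your example of $L$ equal to the constants shows --- but your proposed repair does not work. You claim that inside the proof of Theorem~\ref{T:expo} the subspace $L$ ``is built as the closed linear span of exactly such an increasing family of scaled step-function subspaces.'' It is not. In that proof $L$ arises as the closure of $f(L_T)$, where $L_T$ is the F-space of one-parameter subgroups of ${\rm Aut}(\mu)$ with image in $\langle T\rangle_c$ and $f$ comes from Lemma~\ref{L:lpa}; no refining partitions and no step functions appear anywhere in the construction, and nothing in it guarantees that $L$ contains even a single nonzero element $g$ taking values in a lattice $c\mathbb{Z}$ (equivalently, that $\exp(L)$ contains any nontrivial compact subgroup). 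So the refining sequence of partitions you need is simply not available from the proof of the theorem, and your argument for (ii) has a genuine gap at exactly the point you flagged as the main obstacle.

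What is actually needed is a further genericity ingredient independent of Theorem~\ref{T:expo}: the paper proves (Lemma~\ref{L:tor}) that for a generic $T$ the group $\langle T\rangle_c$ contains a dense torsion subgroup. This rests on Ageev's result that for each $n$ the projection onto the first coordinate of a relatively open nonempty subset of $\{(T,S)\colon S^n={\rm Id},\ ST=TS\}$ is non-meager, combined with the Akcoglu--Chacon--Schwartzbauer theorem that generically $C(T)=\langle T\rangle_c$. One then picks finite-order $S_n\in\langle T\rangle_c$ with $S_n\to T$, lifts each to a one-parameter subgroup $X_n$ with $X_n(1)=S_n$ via the surjectivity of $e\upharpoonright L_T$ established in the proof of Theorem~\ref{T:expo}, and takes $K_n={\rm image}(X_n)$; since $S_n$ has finite order $k$, the corresponding $g=f(X_n)$ is $(2\pi/k)\mathbb{Z}$-valued, so $X_n$ factors through $\mathbb{R}/k\mathbb{Z}$ and $K_n$ is a circle. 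The groups generated by $K_1\cup\cdots\cup K_n$ are then the finite-dimensional tori with dense union, and the paper still has to do some work (choosing a basis adapted to the discrete kernels ${\rm ker}(\phi_n)$) to see that the resulting direct system is the standard one defining $\mathbb{T}_\infty$ --- work which your partition picture would have rendered automatic had the partitions existed. Your closing remark that the ``embedding'' must be read as a continuous injective homomorphism rather than a homeomorphism onto a subspace of a Polish group is a fair observation, but it does not repair the missing construction of the $K_n$.
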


In Corollary~\ref{C:pro}, point (i) strengthens de la Rue and de Sam
Lazaro's result from \cite{RS} that a generic transformation lies on
a one-parameter subgroup of ${\rm Aut}(\mu)$. Point (ii)
strengthens the result of Ageev \cite{Ag}
that for a generic $T\in {\rm Aut}(\mu)$ the group $\langle T\rangle_c$ contains an
arbitrary finite abelian group.

Glasner and Weiss in \cite{GW} asked if $\langle T\rangle_c$ is a L{\'e}vy group for a generic
$T\in {\rm Aut}(\mu)$. Actually, it is not even ruled out at this point that $\langle T\rangle_c$ is
isomorphic to $L_0(\lambda, {\mathbb T})$. Theorem~\ref{T:expo} and Corollary~\ref{C:pro}
provide some evidence that such results may be true. More evidence was subsequently found by Melleray and Tsankov in
\cite{MT}, where they showed that the group $\langle T\rangle_c$ is extremely amenable for a generic $T\in {\rm Aut}(\mu)$. One may
hope that extreme amenability and an increasing sequence of toruses with dense union, as in Corollary~\ref{C:pro}(ii), will imply
being L{\'e}vy, but such an implication is false in general as shown in \cite{FS}.
Additionally, it is proved in \cite{MT} that the closed subgroup generated by a generic unitary transformation of a separable infinite dimensional
Hilbert space is isomorphic to $L_0(\lambda, {\mathbb T})$.

Theorem~\ref{T:expo} can be combined with the methods of \cite{Ag} to obtain also
the result of Stepin and Eremenko \cite{SE} that $\langle T\rangle_c$ contains the group ${\mathbb T}^{\mathbb N}$
for a generic $T\in {\rm Aut}(\mu)$. (This theorem, which strengthens an earlier result, announced in \cite{Ag2},
that $\langle T\rangle_c$ contains an arbitrary countable abelian group, is stated in the abstract of \cite{SE}. It is not proved explicitly in that
paper, but Aaron Hill points out that it can be deduced from \cite[Theorem 1.3]{SE}.)
Furthermore, using the approach of the present paper and the part of the work of Melleray and Tsankov \cite{MT} that
pertains to the isometry group of the Urysohn space, one could show that a result analogous to
Corollary~\ref{C:pro}(i) holds for this group (the closed group generated by a generic isometry of the
Urysohn space is a continuous homomorphic image of a separable F-space) provided one could show that a generic isometry of the
Urysohn space lies on a one-parameter group. However, the question whether this is true remains open.

\section{Proofs}
We will need to following lemma that resembles the Kuratowski--Ulam theorem.

\begin{lemma}\label{L:auto}
Let $A\subseteq {\rm Aut}(\mu)$ have the Baire property. Then $A$
is meager (comeager, respectively) if and only if there is a
comeager set of $T\in {\rm Aut}(\mu)$ such that $A\cap \langle T\rangle_c$ is meager (comeager, respectively) in
$\langle T\rangle_c$.
\end{lemma}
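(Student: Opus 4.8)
The plan is to reduce the whole biconditional to a single statement about dense open sets and then to a category‑preservation property of the power maps. Write $G={\rm Aut}(\mu)$ and recall that each $\langle T\rangle_c$, being a closed subgroup of a Polish group, is a nonempty Polish, hence Baire, space. I would first isolate the key reduction: it suffices to prove
\[
(\dagger)\qquad \text{for every dense open } D\subseteq G,\ \{T\in G: D\cap\langle T\rangle_c\ \text{is dense in}\ \langle T\rangle_c\}\ \text{is comeager.}
\]
Granting $(\dagger)$, all four implications follow by soft arguments. If $A$ is meager, cover it by closed nowhere dense sets $F_n$; applying $(\dagger)$ to the dense open sets $G\setminus F_n$ and intersecting the resulting comeager sets of $T$ gives comeagerly many $T$ for which each $F_n\cap\langle T\rangle_c$ is nowhere dense, whence $A\cap\langle T\rangle_c$ is meager in $\langle T\rangle_c$; this is the forward direction of the meager equivalence, and the forward direction of the comeager equivalence is obtained by applying it to $A^c$. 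For the converses, suppose $A$ is not meager; since $A$ has the Baire property it is comeager in some nonempty open $U$, so $U\setminus A$ is meager, and by the direction just proved comeagerly many $T$ have $(U\setminus A)\cap\langle T\rangle_c$ meager. Intersecting this comeager family with the non-meager set $U=\{T:T\in U\}$ yields a non-meager family of $T$ for which $U\cap\langle T\rangle_c$ is a nonempty open, hence non-meager, subset of $\langle T\rangle_c$ while $(U\setminus A)\cap\langle T\rangle_c$ is meager; for such $T$ the set $A\cap\langle T\rangle_c$ is non-meager, so it is false that comeagerly many $T$ have $A\cap\langle T\rangle_c$ meager. This gives the converse of the meager equivalence, and the comeager converse follows by complementation.

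So everything rests on $(\dagger)$. Fix a dense open $D$ and a countable base $\{W_k\}$ of nonempty open subsets of $G$. Since $D\cap\langle T\rangle_c$ is dense in $\langle T\rangle_c$ exactly when it meets every $W_k$ that $\langle T\rangle_c$ meets, I would write the complementary bad set as $\bigcup_k E_k$, where $E_k=\{T:\langle T\rangle_c\cap W_k\neq\emptyset,\ \langle T\rangle_c\cap(W_k\cap D)=\emptyset\}$, and show each $E_k$ is meager. The crucial observation is that for any open $O$ one has $\langle T\rangle_c\cap O\neq\emptyset$ iff $T^n\in O$ for some $n$ (a point of the closure lying in the open set $O$ forces a power into $O$), so $U_O:=\{T:\langle T\rangle_c\cap O\neq\emptyset\}=\bigcup_n p_n^{-1}(O)$ is open, where $p_n(T)=T^n$. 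Thus $E_k=U_{W_k}\setminus U_{W_k\cap D}$ is a difference of open sets, and it suffices to show $U_{W_k\cap D}$ is dense in $U_{W_k}$. Restricting to the comeager set of aperiodic $T$ (for which $\langle T\rangle_c$ is perfect and $\{T^n:n\neq0\}$ is already dense in it), I may cover $U_{W_k}$ by the open pieces $p_n^{-1}(W_k)$ with $n\neq0$; it then suffices to prove each $p_n^{-1}(W_k\cap D)=p_n^{-1}(W_k)\cap p_n^{-1}(D)$ is dense in the open set $p_n^{-1}(W_k)$, which reduces to showing that $p_n^{-1}(D)$ is dense in $G$.

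The main obstacle is precisely this last point: for each $n\neq0$ and each dense open $D$, the open set $p_n^{-1}(D)=\{T:T^n\in D\}$ is dense, equivalently the power map $p_n$ sends every nonempty open set to a somewhere-dense set (if $\overline{p_n(N)}$ has nonempty interior $V$, then $V\cap D\neq\emptyset$ forces $p_n(N)\cap D\neq\emptyset$, i.e. $N\cap p_n^{-1}(D)\neq\emptyset$). This is a genuine feature of the dynamics rather than a formal fact, since power maps on Polish groups need not be open; to establish it I would argue by weak approximation using Rokhlin's lemma, approximating a prescribed target near $T_0^{\,n}$ on a tall Rokhlin tower and solving for a transformation $T$ near $T_0$ whose $n$-th power realizes that approximation on the tower. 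Making this solvability uniform and controlling the error off the tower is the technical heart of the argument; once it is in place, $(\dagger)$, and hence the lemma, follow from the category bookkeeping above.
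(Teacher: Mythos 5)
Your category bookkeeping is correct, and in one respect it is cleaner than the paper's own argument: your observation that for $T\in U$ the set $U\cap\langle T\rangle_c$ contains $T$ itself, hence is a nonempty open (so non-meager) subset of the Polish group $\langle T\rangle_c$, lets you get the non-meager converse formally from the forward direction. The paper instead handles that direction by conjugating $A$ around using Halmos's theorem that some element of the open set has dense conjugacy class; your route avoids that ingredient entirely. The reduction of $(\dagger)$ to the density of $p_n^{-1}(D)$ for $n\neq 0$, i.e.\ to the statement that each power map $T\mapsto T^n$ sends nonempty open sets to somewhere dense sets, is also the correct identification of the dynamical content, and matches what the paper uses.

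The genuine gap is that this last statement is precisely where the entire difficulty of the lemma lives, and you do not prove it. The paper imports it as a theorem of King (\cite[Theorem (23a), p.~538]{Ki2}, from ``The generic transformation has roots of all orders''); it is a deep result whose proof occupies a substantial part of that paper, closely tied to the existence of roots for generic transformations, and it cannot be dispatched by the two-sentence Rokhlin-tower sketch you offer (``solving for a transformation $T$ near $T_0$ whose $n$-th power realizes that approximation on the tower'' is exactly the hard inverse problem). As written, your proposal reduces the lemma to an unproved theorem. There is also a smaller error: you restrict to ``aperiodic'' $T$ to conclude that $\{T^n\colon n\neq 0\}$ is dense in $\langle T\rangle_c$, but aperiodicity does not give this --- a mixing transformation is aperiodic yet has discrete power orbit, so $\langle T\rangle_c\cong\mathbb{Z}$ there. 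The correct comeager condition, used in the paper, is that $\{T^n\colon n\in\mathbb{Z}\}$ is not discrete (a dense $G_\delta$ condition, witnessed by conjugates of irrational rotations); substituting that condition repairs this step without changing your argument.
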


\begin{proof} We will need the following result of King \cite[Theorem (23a), p.538]{Ki2}:

\noindent For $q\in {\mathbb Z}\setminus \{ 0\}$, the function
$f_q: {\rm Aut}(\mu)\to {\rm Aut}(\mu)$ given by $f_q(T) = T^q$
has the property that the image of each open non-empty subset of
${\rm Aut}(\mu)$ is somewhere dense.

\noindent (This theorem is stated in \cite{Ki2} for $q>1$, but it is clear for $q=1$, and
the case $q< 0$ follows from the case $q>0$ since $T\to T^{-1}$ is a homeomorphism
of ${\rm Aut}(\mu)$.)

The property from the conclusion of this result implies that the
images under $f_q$, $q\in {\mathbb Z}\setminus \{ 0\}$, of
non-meager sets are non-meager. It follows from this statement
that preimages under $f_q$ of comeager sets are comeager. Let $A$
be comeager. Let $B\subseteq A$ be a $G_\delta$ set dense in ${\rm
Aut}(\mu)$. Set
$$
B_\infty  = \bigcap_{q\in {\mathbb Z}\setminus \{ 0\}}
f_q^{-1}(B).
$$
Observe that, by what was said above, $B_\infty$ is comeager. We also
have that for each $T\in B_\infty$
$$
\{ T^q: q\in {\mathbb Z}\setminus \{ 0\}\}\subseteq B.
$$
Note that the set of all $T\in {\rm Aut}(\mu)$ for which $\{ T^q: q\in {\mathbb Z}\}$ is not discrete is
a $G_\delta$ and it is dense in ${\rm Aut}(\mu)$ since it contains all transformations isomorphic to
irrational rotations of the circle.
Let $C$ consist of those $T\in B_\infty$ for which the set
$\{ T^q: q\in {\mathbb Z}\}$ is not discrete. Then $C$ is still
comeager. It follows, that for each $T\in C$, $B\cap \langle T\rangle_c$ is dense in $\langle T\rangle_c$.
Since $B$ is a $G_\delta$, we have that $B\cap \langle T\rangle_c$, and therefore also
$A\cap \langle T\rangle_c$, is comeager in $\langle T\rangle_c$, as required.

To complete the proof of the lemma it suffices to show that if $A$
is non-meager, then there exists a non-meager set of $T\in {\rm
Aut}(\mu)$ such that $A\cap \langle T\rangle_c$ is non-meager. Since $A$ is
non-meager, it is comeager in some non-empty open set. This open
sets contains a transformation with dense conjugacy class by \cite[Theorem 1]{Ha}. Thus,
there exist $S_n\in {\rm Aut}(\mu)$, $n\in {\mathbb N}$, such that
$\bigcup_n S_n A S_n^{-1}$ is comeager. So, there is a comeager
set of $T$ such that $\bigcup_n S_n A S_n^{-1}$ is comeager in
$\langle T\rangle_c$. It follows that there exits $n_0$ and a non-meager set of
$T$ such that $S_{n_0}AS_{n_0}^{-1}$ is non-meager in $\langle T\rangle_c$. For
each such $T$, $A$ is non-meager in
$$
S_{n_0}^{-1}\langle T\rangle_c S_{n_0}= \langle S_{n_0}^{-1}TS_{n_0}\rangle_c.
$$
Thus, there exists a non-meager set of $T'$ with $A$ non-meager in
$\langle T'\rangle_c$.
\end{proof}

Let us point out that Lemma~\ref{L:auto} can also be derived using the methods of \cite[Section
8]{MT}; one would use Lemma 8.7, Theorem A.5, and the proof of Theorem 8.13 of that paper. These methods were
invented after the authors of \cite{MT} became aware of the statements of the above lemma and of Theorem~\ref{T:expo}.

We will also need the property stated in the lemma below of the map $\exp\colon L_0(\lambda, {\mathbb R})\to
L_0(\lambda, {\mathbb T})$ defined in Section~\ref{S:res}. This property makes the map into
an analogue of the exponential map on Lie groups. The lemma is folklore and follows easily from
Mackey's cocycle theorem. We include its proof here for the sake of completeness.

\begin{lemma}\label{L:lpa}
If $X\colon {\mathbb R}\to L_0(\lambda, {\mathbb T})$ is a
one-parameter subgroup of the group $L_0(\lambda, {\mathbb T})$, then there exists a
unique element $g\in L_0(\lambda, {\mathbb R})$ such that for each
$t\in {\mathbb R}$
\[
X(t) = \exp(tg).
\]
\end{lemma}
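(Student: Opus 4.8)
The plan is to prove uniqueness by a direct pointwise argument and existence by lifting $X$ to a jointly measurable function of two variables, analyzing its fibers with the theory of measurable homomorphisms, and reassembling the fibers into a single element $g$ of $L_0(\lambda,{\mathbb R})$.

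For uniqueness, suppose $\exp(tg)=\exp(tg')$ for all $t$. Then for each $t$ we have $e^{it(g(x)-g'(x))}=1$ for $\lambda$-a.e.\ $x$. Fixing a sequence $t_n\to 0$ with $t_n\neq 0$ and removing the corresponding countable union of null sets, we get, for a.e.\ $x$, that $t_n(g(x)-g'(x))\in 2\pi{\mathbb Z}$ for all $n$; since $t_n\to 0$, this forces $g(x)=g'(x)$. Hence $g=g'$ in $L_0(\lambda,{\mathbb R})$.

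For existence, I would first record a realization step. Since $X$ is a continuous (hence Borel) map of the Polish space ${\mathbb R}$ into $L_0(\lambda,{\mathbb T})$, whose elements are measure classes of measurable functions $[0,1]\to{\mathbb T}$, a standard argument (using uniform continuity of $X$ on compact intervals together with completeness of $L_1$ on the product) produces a jointly measurable $\Phi\colon{\mathbb R}\times[0,1]\to{\mathbb T}$ such that, for a.e.\ $t$, the function $\Phi(t,\cdot)$ is a representative of $X(t)$. Next I would transfer the homomorphism identity to $\Phi$. Because $X(s+t)=X(s)X(t)$ for all $s,t$, and because the set of parameters where $\Phi$ fails to realize $X$ is null, for a.e.\ pair $(s,t)$ one has $\Phi(s+t,\cdot)=\Phi(s,\cdot)\Phi(t,\cdot)$ in $L_0(\lambda,{\mathbb T})$; by Fubini this says $\Phi(s+t,x)=\Phi(s,x)\Phi(t,x)$ for a.e.\ $(s,t,x)$, and hence, applying Fubini once more, for $\lambda$-a.e.\ $x$ the map $t\mapsto\Phi(t,x)$ is a measurable function ${\mathbb R}\to{\mathbb T}$ satisfying the homomorphism equation for a.e.\ pair $(s,t)$. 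This is exactly the point at which Mackey's cocycle theorem — equivalently, the fact that an almost-everywhere homomorphism ${\mathbb R}\to{\mathbb T}$ agrees a.e.\ with a genuine continuous character — applies: for a.e.\ $x$ there is $c(x)\in{\mathbb R}$ with $\Phi(t,x)=e^{itc(x)}$ for a.e.\ $t$.

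It remains to see that $x\mapsto c(x)$ is measurable and that $g:=c$ works. For measurability I would consider the jointly measurable function $F(x,c)=\int_0^1\bigl|\Phi(t,x)-e^{itc}\bigr|^2\,dt$; for a.e.\ $x$ the equation $F(x,c)=0$ has the unique solution $c=c(x)$, so the measurable set $\{(x,c):F(x,c)=0\}$ is, over a conull set of $x$, the graph of $c$, whence $c$ is (Lebesgue) measurable and $g:=c\in L_0(\lambda,{\mathbb R})$. Finally, joint measurability of $\Phi(t,x)$ and of $e^{itc(x)}$ together with the fiberwise identity and Fubini give $X(t)=\exp(tg)$ for a.e.\ $t$; since $t\mapsto X(t)$ and $t\mapsto\exp(tg)$ are both continuous into $L_0(\lambda,{\mathbb T})$ and agree on a dense set, the identity holds for every $t$. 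I expect the main obstacle to be the first step: producing the jointly measurable representative $\Phi$ and then legitimately passing, through the two applications of Fubini, from the everywhere-valid identity $X(s+t)=X(s)X(t)$ in $L_0$ to the fiberwise almost-homomorphism property. Once $\Phi$ is in hand, the identification of the fibers as characters and the measurable reconstruction of $g$ are routine.
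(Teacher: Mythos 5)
Your proof is correct and follows essentially the same route as the paper's: lift $X$ to a jointly measurable function of $(t,x)$, observe that the cocycle identity holds almost everywhere, and invoke Mackey's theorem to identify the fibers $t\mapsto\Phi(t,x)$ with continuous characters $e^{itc(x)}$. The only difference is bookkeeping: the paper applies the two-variable form of Mackey's theorem to correct the lift on a null set into a strict cocycle, which yields Borel measurability of $g$ for free, whereas you apply the one-variable rigidity fiberwise and then recover measurability of $c$ by a separate (valid) selection argument.
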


\begin{proof} The one-parameter group $X$, gives rise to a Borel function
$\phi\colon [0,1]\times {\mathbb R}\to {\mathbb T}$ such that for each $t\in {\mathbb R}$, the function $\phi(\cdot, t)$ is in the $\lambda$-class $X(t)\in L_0(\lambda, {\mathbb T})$
and with the property that for almost all with respect to the product of Lebesgue measures $(x,r,s)\in [0,1]\times {\mathbb R}^2$
we have
\begin{equation}\label{E:cocy}
\phi(x,r+s) = \phi(x,r)\phi(x,s).
\end{equation}
By Mackey's theorem \cite[Theorem IV.9]{Fa} by changing $\phi$ on a set of measure zero in $[0,1]\times {\mathbb R}^2$, we can assume
that \eqref{E:cocy} holds for all $x$, $r$, and $s$ and that $\phi$ is still  Borel. Now for each $x\in [0,1]$, $\phi(x, \cdot)$ is a continuous
homomorphism from $\mathbb R$ to ${\mathbb T}$. Thus, there exists a unique real number $g(x)\in {\mathbb R}$ such that for all $t\in {\mathbb R}$,
$\phi(x,t) = e^{itg(x)}$.
It is easy to check that so defined function $g\colon [0,1]\to {\mathbb R}$ is Borel, that it fulfills the conclusion of the lemma, and that it is unique as
an element of $L_0(\lambda, {\mathbb R})$.
\end{proof}

\begin{proof}[Proof of Theorem~\ref{T:expo}]
We will specify three properties that we will show to hold of a generic $T\in {\rm Aut}(\mu)$ and
then prove that the conjunction of these properties implies the property of $T$ in the conclusion of the theorem.

Recall that an {\em F-space} is a topological vector space whose topology
is given by a complete metric. For example,
$L_0(\lambda, {\mathbb R})$ is a separable F-space.

We fix some notation in the broader context of Polish groups. Let
$G$ be a Polish group. Let ${\mathcal L}(G)$ be the space of all one
parameter subgroups of $G$, that is, all continuous homomorphisms
from $\mathbb R$ to $G$. There are three elements of the structure
of ${\mathcal L}(G)$ we want to point out. All the assertions about
them made below are easy to prove and we leave it to the reader to
supply these proofs.

1. ${\mathcal L}(G)$ is a Polish space when equipped with the
topology it inherits from the space of all continuous functions from
$\mathbb R$ to $G$ taken with the compact-open topology.

2. The function
\[
{\mathbb R}\times {\mathcal L}(G)\ni (r, X)\to rX \in {\mathcal L}(G)
\]
given by
\[
(rX)(t) = X(rt)
\]
defines a continuous with respect to the topology from point 1 action of
$\mathbb R$ on ${\mathcal L}(G)$.

3. The function $e: {\mathcal L}(G)\to G$ given by
\[
e(X)= X(1)
\]
is continuous.

For $T\in {\rm Aut}(\mu)$, let
\[
L_T=\{ X\in {\mathcal L}({\rm Aut}(\mu)): {\rm image}(X)\subseteq \langle T\rangle_c\}.
\]
By points 2 and 3, since $\langle T\rangle_c$ is closed, $L_T$ is a closed subset of ${\mathcal
L}({\rm Aut}(\mu))$. By the fact that $\langle T\rangle_c$ is abelian and by point 1, $L_T$ is a separable F-space
with multiplication by reals given by the action of $\mathbb R$ on
${\mathcal L}({\rm Aut}(\mu))$ from point 2 and addition given by
\[
(X+Y)(t) = X(t)Y(t).
\]

We will need the following result of de la Rue and de Sam
Lazaro \cite{RS}, which we state here in our terminology:

\noindent the image of $e: {\mathcal L}({\rm Aut}(\mu)) \to {\rm
Aut}(\mu)$ is comeager.

\noindent Now, it follows from Lemma~\ref{L:auto} that there exists a comeager
set of $T$ such that ${\rm image}(e)$ is comeager in $\langle T\rangle_c$. From this point on, our generic
transformation $T$ is assumed to have this property, that is, ${\rm
image}(e)$ is comeager in $\langle T\rangle_c$.

For an element $g$ of a group $G$, let $C(g)$ stand for the
centralizer of $g$, that is, $C(g) = \{ h\in G: hg=gh\}$. We will need
the following result of Akcoglu, Chacon, and Schwartzbauer \cite[Theorem 3.3]{ACS}, for another proof of which the reader may consult \cite{MT}:

\noindent there is a comeager set of $S\in {\rm Aut}(\mu)$ such
that $\langle S\rangle_c = C(S)$.

\noindent (In \cite{ACS}, it is actually proved that $\langle S\rangle_c = C(S)$ holds for all
transformations $S$ with strong approximation by partitions. It is not difficult to see, and it follows directly from \cite[Theorem 1.1]{KS},
that the set of all transformations with strong approximation by partitions is comeager in ${\rm Aut}(\mu)$.)

Now, it follows from Lemma~\ref{L:auto} that there is a comeager set
of $T$ such that
\[
A_T = \{ S\in \langle T\rangle_c\colon \langle S\rangle_c = C(S)\}
\]
is comeager in $\langle T\rangle_c$. From this point on we assume that our $T$ has
this property.

We aim to show that $e\upharpoonright L_T$ is onto $\langle T\rangle_c$.
If $S\in A_T$ and
$X\in {\mathcal L}({\rm Aut}(\mu))$ is such that $e(X) = S$, then
\[
{\rm image}(X)\subseteq C(S)=\langle S\rangle_c\subseteq
\langle T\rangle_c.
\]
It follows that $X\in L_T$. Thus, we get
\[
e(L_T)\supseteq A_T\cap {\rm image}(e).
\]
The set on the right-hand side is comeager in $\langle T\rangle_c$, hence $e(L_T)$ is
comeager in $\langle T\rangle_c$. Since $e\upharpoonright L_T$ is a group homomorphism,
considering $L_T$ as a group with vector addition, we see that $e(L_T)$ is a comeager
subgroup of the Polish group $\langle T\rangle_c$. It follows that $e(L_T) = \langle T\rangle_c$.

We view elements of ${\rm Aut}(\mu)$ as unitary operators on the Hilbert space $L_2^0(\mu, {\mathbb C})$ of all
measure classes of square summable complex functions with zero integral.
By \cite[Theorem 2]{Ha}, a generic element of ${\rm Aut}(\mu)$ does not have non-trivial eigenvectors in $L_2^0(\mu, {\mathbb C})$, and
we assume that our $T$ has this property. Now, by the spectral theorem \cite[Proposition 4.7.13]{Pe} applied to the
set of operators from $\langle T\rangle_c$, there is a Borel probability measure $\lambda$
on a standard Borel space and an isometry $U\colon L_2^0(\mu, {\mathbb C})\to L_2(\lambda, {\mathbb C})$ such that,
for each $S\in \langle T\rangle_c$, $USU^*$ is equal to the multiplication by an element of $L_0(\lambda, {\mathbb T})$.
We identify $USU^*$ with that element.  Note that since $T$ does not have non-trivial eigenvectors, $\lambda$ is atomless and, therefore,
can be assumed to be equal to Lebesgue measure. We see that the function
\[
\langle T\rangle_c\ni S\to USU^*\in L_0(\lambda, {\mathbb T})
\]
is a group isomorphism between $\langle T\rangle_c$ and its image and it is also a homeomorphism between
$\langle T\rangle_c$ and its image, where $\langle T\rangle_c$ is taken with the topology inherited from ${\rm Aut}(\mu)$ while its image is
taken with the
topology of convergence in $\lambda$. Since $\langle T\rangle_c$ is a Polish group, being a closed subgroup of ${\rm Aut}(\mu)$,
its isomorphic and homeomorphic image is closed.
It follows that
$\langle T\rangle_c$
can be identified with a closed subgroup of $L_0({\lambda}, {\mathbb T})$.

By the above identification, we have a continuous surjective homomorphism
\begin{equation}\label{E:incl}
e\upharpoonright L_T\colon L_T \to \langle T\rangle_c<L_0(\lambda, {\mathbb T}).
\end{equation}
For every $X\in L_T$, the function mapping $t\in {\mathbb R}$ to $e(X(t))$ is a one-parameter subgroup of
$\langle T\rangle_c$ and, therefore, by inclusion \eqref{E:incl}, of $L_0(\lambda, {\mathbb T})$.
By Lemma~\ref{L:lpa}, there exists a unique element $f(X)$ of $L_0(\lambda, {\mathbb R})$ such that $\exp(tf(X)) = e(X(t))$ for
all $t\in {\mathbb R}$. It is now easy to check that $f$ gives a Borel, and so continuous, linear function $f\colon L_T\to L_0(\lambda, {\mathbb R})$ such
that $\exp\circ f = e\upharpoonright L_T$. Now the closure in $L_0(\lambda, {\mathbb R})$ of $f(L_T)$ is a closed
linear subspace of $L_0(\lambda, {\mathbb R})$ that is as required by the conclusion of the theorem.
\end{proof}

To obtain Corollary~\ref{C:pro}, we need the following lemma. In the proof of the lemma, we will use the following result of
Ageev \cite[the proof of (1) on p. 217]{Ag}:

\noindent given a natural number $n\geq 1$, the projection on the first coordinate of a relatively open, non-empty
subset of $\{ (T,S) \in {\rm Aut}(\mu)^2\colon S^n = {\rm Id}\hbox{ and } ST=TS\}$ is non-meager.

\begin{lemma}\label{L:tor}
There is a comeager set of $T\in {\rm Aut}(\mu)$ with $\langle T\rangle_c$
containing a dense torsion subgroup.
\end{lemma}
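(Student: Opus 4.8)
The plan is to reduce the density of the torsion subgroup to a single concrete genericity statement that feeds directly into Ageev's result quoted above. First I would restrict attention to the comeager set of $T$ for which $\langle T\rangle_c = C(T)$, available from the result of Akcoglu, Chacon, and Schwartzbauer used in the proof of Theorem~\ref{T:expo}; for such $T$ the torsion elements of $\langle T\rangle_c$ are exactly the torsion elements commuting with $T$, so I may work with the commutant in place of $\langle T\rangle_c$. The key observation is a soft reduction: since $\langle T\rangle_c$ is abelian, the closure $N$ of its torsion subgroup is a closed subgroup, and if $T\in N$ then $N$ contains every $T^m$, hence all of $\overline{\{T^m\colon m\in \mathbb Z\}} = \langle T\rangle_c$, so that the torsion subgroup is dense. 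Thus it suffices to show that a generic $T$ lies in the closure of the set of torsion elements commuting with it, that is, that every neighborhood of $T$ contains some $S$ with $S^n = {\rm Id}$ for some $n\geq 1$ and $TS = ST$.

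To phrase this as a genericity statement, fix a countable base $\{V_k\}$ for ${\rm Aut}(\mu)$, let $\pi$ denote projection on the first coordinate, write $F_n = \{S\colon S^n = {\rm Id}\}$ and $P_n = \{(T,S)\colon S\in F_n,\ TS=ST\}$ (the closed set appearing in Ageev's statement), and set
\[
W_k = \{T\in {\rm Aut}(\mu)\colon \exists\, S\in V_k \text{ with } S^n = {\rm Id}\text{ for some } n\geq 1 \text{ and } TS=ST\}.
\]
Then $W_k = \bigcup_n \pi\big(P_n\cap({\rm Aut}(\mu)\times V_k)\big)$, so $W_k$ is analytic and has the Baire property. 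The desired property of $T$ — that every basic neighborhood $V_k\ni T$ contains a commuting torsion element — is exactly membership in $\bigcap_k (V_k^c\cup W_k)$. Hence the whole lemma comes down to showing that each $W_k$ is comeager in $V_k$.

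For this I would use the standard fact that a set with the Baire property is comeager in the open set $V_k$ as soon as it is non-meager in every non-empty open $V'\subseteq V_k$. So fix such a $V'$. Since the periodic transformations (those with $S^n = {\rm Id}$ for some $n\geq 1$) are dense in ${\rm Aut}(\mu)$, the set $V'$ contains some $S_0$ with $S_0^{\,n}={\rm Id}$; as $S_0$ commutes with itself and $V'\subseteq V_k$, the diagonal pair $(S_0,S_0)$ lies in $P_n\cap (V'\times V_k)$, which is therefore a non-empty relatively open subset of $P_n$. Ageev's result then yields that $\pi\big(P_n\cap(V'\times V_k)\big)$ is non-meager; since this projection is contained in $W_k\cap V'$, we get that $W_k$ is non-meager in $V'$. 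Ranging over all $V'$ shows $W_k$ is comeager in $V_k$, and intersecting over $k$ together with the comeager set where $\langle T\rangle_c = C(T)$ produces the required comeager set.

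The genuine content lies in the comeagerness of each $W_k$ in $V_k$, and the point to get right there is that one needs only the \emph{non-emptiness} of $P_n\cap(V'\times V_k)$ in order to invoke Ageev's theorem — this is precisely what the diagonal pair $(S_0,S_0)$, combined with the density of periodic transformations, supplies cheaply. Everything else (the reduction using $\langle T\rangle_c = C(T)$, the closed-subgroup argument that $T\in N$ forces $N=\langle T\rangle_c$, and the passage from ``non-meager in every open subset'' to ``comeager'' for sets with the Baire property) is routine.
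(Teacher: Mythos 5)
Your proposal is correct and follows essentially the same route as the paper's proof: both reduce to Ageev's projection result applied to the non-empty relatively open set of pairs containing the diagonal point $(S_0,S_0)$ for a periodic $S_0$ (using density of periodic transformations), and both then invoke the Akcoglu--Chacon--Schwartzbauer result $\langle T\rangle_c=C(T)$ to place the approximating torsion elements inside $\langle T\rangle_c$. The only differences are cosmetic --- you work with a countable base and explicitly note the Baire property of the analytic sets $W_k$, where the paper uses $\epsilon$-balls and leaves that point implicit.
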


\begin{proof} Fix a metric $d$ on ${\rm Aut}(\mu)$ compatible with the topology. Fix $\epsilon>0$. We claim that the set
\begin{equation}\notag
A_\epsilon = \{ T\in {\rm Aut}(\mu)\colon \exists S\; S\hbox{ has finite order}, ST=TS, \hbox{ and } d(S,T)<\epsilon\}
\end{equation}
is comeager. Let $S_0$ have finite order, say $n\in {\mathbb N}$. Then the set
\[
\{ (T,S)\in {\rm Aut}(\mu)^2\colon S^n = {\rm Id}, ST=TS,  d(T,S_0)<\epsilon, \hbox{ and }d(T,S)<\epsilon\}
\]
is relatively open in the set
\[
\{ (T,S)\in {\rm Aut}(\mu)^2\colon S^n = {\rm Id}\hbox{ and } ST=TS\}
\]
and is non-empty, as it contains $(S_0, S_0)$. It follows by Ageev's result quoted above that its projection on
the first coordinate is non-meager and is included in the $\epsilon$ ball around $S_0$. So each ball around $S_0$
contains a non-meager set of $T$ from the set $A_\epsilon$.
Since the set of such $S_0$ (with varying $n$) is dense, it follows that $A_\epsilon$ is comeager.
As a consequence, we see that $\bigcap_n A_{1/n}$ is comeager. This intersection consists
of transformations $T$ for which there exists a sequence $(S_n)$ of finite order transformations with $S_nT=TS_n$
and $S_n\to T$ as $n\to\infty$. Since, by the result of Akcoglu, Chacon, and Schwartzbauer \cite[Theorem 3.3]{ACS} already mentioned 
in the proof of Theorem~\ref{T:expo}, for a comeager set of $T$, the centralizer of $T$ is equal to
the closed group generated by $T$, the conclusion follows.
\end{proof}

\begin{proof}[Proof of Corollary~\ref{C:pro}]
(i) This point is immediate from Theorem~\ref{T:expo}.

(ii) It suffices to show that there are groups $K_n< \langle T\rangle_c$, $n\in {\mathbb N}$, such that
each $K_n$ is isomorphic to ${\mathbb T}$ and $\bigcup_n K_n$ is dense in $\langle T\rangle_c$. To see sufficiency of this condition,
let $L_n$ be the group generated by
$K_1\cup\cdots \cup K_n$. Then $L_n$ is a finite dimensional torus, $L_n<L_{n+1}$ for each $n$, and $\bigcup_nL_n$ is dense
in $\langle T\rangle_c$.
Possibly going to a subsequence, we can find linear subspaces $V_1<V_2<\cdots$ of the subspace of $L_0(\lambda, {\mathbb R})$
that maps homomorphically onto $\langle T\rangle_c$ found in (i), with
$V_n$ having dimension $n$, and continuous homomorphisms $\phi_n\colon V_n\to L_n$ with discrete
${\rm ker}(\phi_n)$ and with $\phi_{n+1}\upharpoonright V_n=\phi_n$. Inductively, we pick a basis $e_1, e_2, \dots$ of the linear space
$\bigcup_nV_n$ so that for each $n$
\[
{\rm ker}(\phi_n)= {\mathbb Z}e_1 + \cdots + {\mathbb Z}e_n.
\]
Now there is an isomorphism $f_n\colon {\mathbb T}^n\to L_n$ such that
\[
\phi_n\circ g_n = f_n\circ \pi_n,
\]
where $\pi_n\colon {\mathbb R}^n \to {\mathbb T}^n$ is the exponential map and $g_n\colon {\mathbb R}^n\to {\mathbb R}e_1+\cdots + {\mathbb R}e_n$
is the canonical linear isomorphism. Then $f_{n+1}\upharpoonright {\mathbb T}^n = f_n$ if ${\mathbb T}^n$ is considered as a subgroup of
${\mathbb T}^{n+1}$ consisting of points with the last coordinate equal to $1$. It follows that the sequence $(f_n)$ induces
a continuous embedding from ${\mathbb T}_\infty$ to $\langle T\rangle_c$ that is as required.

It remains to find groups $K_n$. By Lemma~\ref{L:tor}, we can pick transformations $S_n$ of finite order such that $S_n\to T$ as $n\to\infty$,
$S_nT=TS_n$, and $S_n\not= {\rm Id}$. Using again \cite[Theorem 3.3]{ACS}, we can assume that $S_n\in \langle T\rangle_c$ for all $n$.
By Theorem~\ref{T:expo}, there are one-parameter subgroups $X_n$, $n\in {\mathbb N}$, of
$\langle T\rangle_c$ with $X_n(1) = S_n$. Let $K_n = {\rm image}(X_n)$.
\end{proof}

\smallskip

\noindent {\bf Acknowledgement.} I would like to thank Aaron Hill for bringing to my attention
papers \cite{Ag2} and \cite{SE}.

\end{document}